\theoremstyle{plain}
\newtheorem{thm}{Theorem}
\theoremstyle{definition}
\newtheorem{ex}{Example}
\theoremstyle{remark}
\newtheorem*{ackn}{Acknowledgment}
\newcommand{\C}{\mathbb{C}}
\newcommand{\R}{\mathbb{R}}
\newcommand{\N}{\mathbb{N}}
\title{Endomorphisms and derivations of the measure algebra of commutative hypergroups}
\author{{\.Z}ywilla Fechner, Eszter Gselmann and L\'{a}szl\'{o} Sz\'ekelyhidi}
\begin{document}
 
 \maketitle
 
	\begin{abstract}
		Endomorphisms of the measure algebra of commutative hypergroups are investigated. We focus on derivations and higher order derivations which are closely related to moment function sequences of higher rank. We describe the exact connection between those higher order derivations which are endomorphisms of the measure algebra if it is considered as a module over the ring of continuous functions. 
	\end{abstract}

\section{Introduction}\label{intro}
Moment function sequences on hypergroups have been defined in \cite{MR1312826} (see also \cite{MR2107959,MR2161803,MR2431934,MR2978690}). In \cite{FecGseSze20} and \cite{FecGseSze22} moment function sequences of higher rank have been defined and investigated. In this paper we present results which explain a connection between moment function sequences of higher rank and higher order derivations of the measure algebra of commutative hypergroups. For the discussion on derivations on abstract structures we refer to \cite{GseKisVin18} or
\cite{KisLac18}
It turns out that moment function sequences generate higher order derivations, which are not just linear operators of the measure algebra, but they are endomorphisms of the measure algebra when considered as a module over the ring of continuous functions. And conversely: every higher order derivation of this property generates a moment function sequence of higher rank. 
\section{Measure algebra}
Let $X$ be a commutative hypergroup. The space of all complex valued continuous functions on $X$ will be denoted by $\mathcal C(X)$. Equipped with the pointwise linear operations and with the topology of uniform convergence on compact sets it is a locally convex topological vector space. We recall that its topological dual $\mathcal C^*(X)$ can be identified with the {\it measure algebra} of $X$: this is the space $\mathcal M_c(X)$ of all compactly supported complex Borel measures on $X$ equipped with the linear operations (addition and multiplication by complex numbers) and with the convolution of measures defined by
$$
\int_X f\,d(\mu*\nu)=\int_X\int_X f(x*y)\,d\mu(x)\,d\nu(y)
$$ 
for each function $f$ in $\mathcal C(X)$. If we equip $\mathcal M_c(X)$ with weak*-topology and then it is a locally convex topological space, in fact, a topological algebra. The topological dual of $\mathcal M_c(X)$ can be identified with $\mathcal C(X)$: for each weak*-continuous linear functional $\Lambda:\mathcal M_c(X)\to \C$ there exists a unique continuous   function $\varphi$ in $\mathcal C(X)$ such that
$$
\Lambda(\mu)=\int_X \varphi\,d\mu
$$
holds whenever $\mu$ is in $\mathcal M_c(X)$ (see \cite{MR0365062}, Section 3.14). We can write $\varphi(\mu)=\Lambda(\mu)$. In particular, we have  $\varphi(\delta_x)=\varphi(x)$ for each $x$ in $X$. For instance, the multiplicative functionals of the measure algebra arise in this manner exactly from the exponentials of $X$.

Besides the linear structure of the measure algebra we can consider it as a module over the ring $\mathcal C(X)$ of continuous functions on $X$: the action of $\varphi$ in $\mathcal C(X)$ on $\mathcal M_c(X)$ is defined by the multiplication of the measure $\mu$ by $\varphi$ defined as
$$
\langle \varphi\cdot \mu, f\rangle=\int_X f \varphi\,d\mu.
$$ 
Clearly, $\varphi \cdot \mu$ is in $\mathcal C(X)$. We simply write $\varphi \mu$ for $\varphi\cdot \mu$.

If we say that a map $F:\mathcal M_c(X)\to \mathcal M_c(X)$ is a {\it module homomorphism}, then we mean that it is an endomorphism of $\mathcal M_c(X)$ {\it as a $\mathcal C(X)$-module}. In particular, we have $F(\varphi\mu)=\varphi  F(\mu)$ for each $\mu$ in $\mathcal M_c(X)$ and $\varphi$ in $\mathcal C(X)$. Obviously, this implies homogeneity with respect to multiplication with complex numbers, which can be identified with constant functions. A module homomorphism is called {\it continuous}, if it is continuous with respect to the weak*-topology. A {\it multiplicative module homomorphism} is a module homomorphism, which is also a homomorphism of the algebra structure of $\mathcal M_c(X)$, that is, it preserves convolution. 

A {\it derivation} of the measure algebra usually defined as a continuous linear operator $D:\mathcal M_c(X)\to\mathcal M_c(X)$ satisfying the additional property 
\begin{equation}\label{der2}
D(\mu*\nu)=D\mu*\nu+\mu*D\nu
\end{equation}
for each $\mu,\nu$ in $\mathcal M_c(X)$. In this paper, however, we shall consider a modified version of this concept. Namely, instead of linearity we require that $D$ is a continuous module homomorphism of $\mathcal M_c(X)$ as a module over the ring of continuous functions $\mathcal C(X)$. In other words, besides \eqref{der2} we assume that 
\begin{equation}\label{der1}
	D(\mu+\nu)=D\mu+D\nu
\end{equation}
and
\begin{equation}\label{der3}
	D(\varphi \mu)=\varphi D\mu
\end{equation}
holds for each $\mu,\nu$ in $\mathcal M_c(X)$ and $\varphi$ in $\mathcal C(X)$. The latter two properties express the fact that $D$ is a module homomorphism.

Suppose that $r$ is a positive integer. We say that tha family $(D_{\alpha})_{\alpha\in\N^r}$ of continuous module homomorphisms of $\mathcal M_c(X)$ is a {\it higher order derivation of rank $r$} if for each $\alpha$ in $\N^r$ we have
\begin{equation}\label{hider}
	D_{\alpha}(\mu*\nu)=\sum_{\beta\leq \alpha} \binom{\alpha}{\beta} D_{\beta}\mu*D_{\alpha-\beta}\nu
\end{equation}
whenever $\mu,\nu$ is in $\mathcal M_c(X)$. The {\it order} of $D_{\alpha}$ is defined as $|\alpha|$. In particular, $D_0$ satisfies
\begin{equation}\label{alghom}
D_0(\mu*\nu)=D_0\mu*D_0\nu,
\end{equation}
that is, $D_0$ is an algebra homomorphism and module homomorphism simultaneously. We call such a mapping a {\it multiplicative module homomorphism}. Moreover, for each $\alpha$ with $|\alpha|=1$ we have
\begin{equation}\label{gender}
D_{\alpha}(\mu*\nu)=D_0\mu*D_{\alpha}\nu+D_{\alpha}\mu*D_0\nu
\end{equation}
holds. In the case $D_0=id$, the identity homomorphism, then $D_{\alpha}$ is a derivation. In general, a continuous module homomorphism $D$ satisfying \eqref{gender} in place of $D_{\alpha}$ with a given $D_0$ which is a multiplicative algebra homomorphism, is called a {\it $D_0$-derivation}. Thus ordinary derivations are exactly the $id$-derivations.
\section{Module homomorphisms}
We begin with the characterization of continuous module homomorphism on $\mathcal{M}_c(X)$.
\begin{thm}\label{mod}
	Let $X$ be a commutative hypergroup. Every continuous module homomorphism $F$ on $\mathcal M_c(X)$ has the form
	\begin{equation}\label{modhom}
		\langle F(\mu),f\rangle = \int_X f\cdot \varphi \, d\mu
	\end{equation}
with a unique function $\varphi$ in $\mathcal C(X)$. Conversely, each  mapping of the form given by \eqref{modhom} is a module homomorphism of $\mathcal M_c(X)$.
\end{thm}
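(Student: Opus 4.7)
The plan is to read off $\varphi$ from the action of $F$ on Dirac measures, and then extend by weak*-density of the span of point masses. First I would fix $x \in X$ and use the module property to pin down $F(\delta_x)$. For any $\psi \in \mathcal C(X)$ with $\psi(x) = 0$ we have $\psi \cdot \delta_x = \psi(x)\,\delta_x = 0$, so $\psi \cdot F(\delta_x) = F(\psi \cdot \delta_x) = 0$. If some $y \neq x$ were to lie in $\mathrm{supp}(F(\delta_x))$, a Urysohn-type function (available since $X$ is locally compact Hausdorff) vanishing at $x$ but identically $1$ on a small open neighborhood $U$ of $y$ would make $\psi \cdot F(\delta_x)$ agree with $F(\delta_x)$ on $U$, and this measure would be nonzero because $|F(\delta_x)|(U) > 0$ by definition of support. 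This contradicts $\psi\cdot F(\delta_x)=0$, so $F(\delta_x) = \varphi(x)\,\delta_x$ for a unique scalar $\varphi(x) \in \C$.

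Next I would verify continuity of $\varphi$ and extend to arbitrary $\mu$. The assignment $x \mapsto \delta_x$ is weak*-continuous (since $\langle \delta_x, f\rangle = f(x)$ for every $f \in \mathcal C(X)$), hence by continuity of $F$ the map $x \mapsto F(\delta_x) = \varphi(x)\,\delta_x$ is weak*-continuous; pairing with the constant function $1 \in \mathcal C(X)$ shows $\varphi \in \mathcal C(X)$. Now define $G(\mu) := \varphi\mu$, which is plainly a continuous module homomorphism. Both $F$ and $G$ are $\C$-linear (the module property provides additivity and, via constant functions, complex homogeneity) and they agree on every Dirac measure, hence on the linear span $S$ of $\{\delta_x : x \in X\}$. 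The pre-annihilator of $S$ in $\mathcal C(X)$ is $\{f \in \mathcal C(X) : f(x) = 0 \text{ for all } x \in X\} = \{0\}$, so $S$ is weak*-dense in $\mathcal M_c(X)$, and continuity forces $F = G$.

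Uniqueness of $\varphi$ is immediate: two candidates $\varphi, \psi$ must satisfy $\varphi(x)\delta_x = \psi(x)\delta_x$ for every $x$, hence $\varphi = \psi$. The converse direction is a routine check: the identity $\varphi(\psi\mu) = (\varphi\psi)\mu$ delivers the module axiom, and continuity of $\mu \mapsto \varphi\mu$ in the weak*-topology follows from $\langle \varphi\mu, f\rangle = \langle \mu, f\varphi\rangle$ together with $f\varphi \in \mathcal C(X)$. The main obstacle is the support argument in the first step, which hinges on the availability of sufficiently many continuous functions on $X$ to separate a point from a closed set; this is why the standard local-compactness and Hausdorff assumptions on a commutative hypergroup are essential to the proof.
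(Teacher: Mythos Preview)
Your argument is correct, but it follows a different path from the paper's. You localize via Dirac measures: use the module identity to force $\mathrm{supp}\,F(\delta_x)\subseteq\{x\}$, read off $\varphi(x)$, verify continuity, and then pass to general $\mu$ by weak*-density of finitely supported measures. The paper instead works globally from the outset: it sets $\Lambda(\mu)=\langle F(\mu),1\rangle$, observes that $\Lambda$ is a weak*-continuous linear functional and therefore is represented by some $\varphi\in\mathcal C(X)$, and then exploits the module property in the form
\[
\langle F(\mu),f\rangle=\langle fF(\mu),1\rangle=\langle F(f\mu),1\rangle=\Lambda(f\mu)=\int_X f\varphi\,d\mu
\]
for every $\mu$ at once. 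The paper's route is shorter and uses no topology on $X$ beyond what is already encoded in the duality $\mathcal C(X)^*\cong\mathcal M_c(X)$; in particular it needs neither a Urysohn-type separation argument nor a density step. Your approach, on the other hand, makes the pointwise picture $F(\delta_x)=\varphi(x)\delta_x$ explicit, which is conceptually pleasant and would generalize to settings where one wants to understand $F$ on a distinguished spanning set rather than through a global functional.
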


\begin{proof}
	Given a function $\varphi$ in $\mathcal C(X)$ we define the mapping $F: \mathcal M_c(X)\to\mathcal M_c(X)$ by
	$$
	\langle F (\mu),f\rangle =\int_X f\cdot  \varphi \,d\mu=\langle \varphi \mu,f\rangle
	$$
	for each $\mu$ in $\mathcal M_c(X)$ and $f$ in $\mathcal C(X)$. Clearly, $F$ is additive and weak*-continuous. Moreover, for each $\psi$ in $\mathcal C(X)$ we have
	$$
	\langle F (\psi \mu),f\rangle=\int_X f\cdot \varphi\,d(\psi \mu)=\int_X f \varphi \psi\, d\mu=\int_X f  \psi\cdot \varphi\, d\mu=
	$$
	$$
	\int_X f \psi\,d(F(\mu))=\int_X f \,d(\psi F(\mu))=\langle \psi F(\mu),f\rangle,
	$$
	hence 
	$$
	F (\psi \mu)=\psi F(\mu),
	$$	
	which proves that $F$ is a continuous module homomorphism.
	\vskip.2cm
	
	For the converse, let $F:\mathcal M_c(X)\to\mathcal M_c(X)$ be a continuous module homomorphism. 
We define $\Lambda: \mathcal M_c(X)\to\C$ by
	$$
	\Lambda(\mu)=\langle F(\mu),1\rangle
	$$
	for each $\mu$ in $\mathcal M_c(X)$. Clearly, $\Lambda$ is a linear mapping from $\mathcal M_c(X)$ into $\C$. We show that $\Lambda$ is also weak*-continuous. Indeed, if $(\mu_i)_{i\in I}$ is a net converging to the zero measure in the weak*-topology, then the net $(F(\mu_i))_{i\in I}$  converges to the zero measure n the weak*-topology as well, by the weak*-continuity of $F$. It follows that the net $(\langle F(\mu_i),1\rangle)_{i\in I}$ converges to zero -- in other words, the net $(\Lambda(\mu_i))_{i\in I}$ converges to zero, hence $\Lambda$ is weak*-continuous. We infer that $\Lambda$ is a linear functional of the space $\mathcal M_c(X)$. It is known, that $\Lambda$ arises from a function in $\mathcal C(X)$ -- more exactly, there exists a unique continuous function $\varphi:X\to\C$ such that
	$$
	\Lambda(\mu)=\int_X \varphi\,d\mu
	$$
	holds for each $\mu$ in $\mathcal M_c(X)$. 
	Then we conclude for each $f$ in $\mathcal C(X)$:
	$$
	\int_X f\,d(\varphi \mu)=\int_X f\varphi\,d\mu=\int_X \varphi \,d(f \mu)= \Lambda(f \mu)=\langle F(f \mu),1\rangle=
	$$
	$$
	\langle f F(\mu),1\rangle=\int_X 1\,d (f F(\mu))=\int_X f\,dF(\mu),
	$$
	that is $F(\mu)=\varphi \mu$, which is equation \eqref{modhom}.

\end{proof}

We infer that the general form of a continuous endomorphism of the measure algebra, as a $\mathcal C(X)$-module, is exactly multiplication by a continuous function. The following theorem describes the continuous multiplicative module homomorphisms of the measure algebra.

\begin{thm}\label{multmod}
	Let $X$ be a commutative hypergroup. Every continuous nonzero multiplicative module homomorphism $F$ on $\mathcal M_c(X)$ has the form
	\begin{equation}\label{multmodhom}
		\langle F(\mu),f\rangle= \int_X f \cdot m\, d\mu
	\end{equation}
	for some exponential $m$ in $\mathcal C(X)$. The exponential $m$ is uniquely determined by $F$. Conversely, each  mapping of the form given by \eqref{multmodhom} is a continuous multiplicative module homomorphism of $\mathcal M_c(X)$.
\end{thm}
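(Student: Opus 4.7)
The plan is to build on Theorem~\ref{mod}, which already identifies every continuous module homomorphism on $\mathcal{M}_c(X)$ as multiplication by a unique continuous function. Since a multiplicative module homomorphism is, in particular, a module homomorphism, that theorem immediately supplies a unique $\varphi\in\mathcal{C}(X)$ with $F(\mu)=\varphi\mu$ for every $\mu\in\mathcal{M}_c(X)$. The remaining task is therefore only to pin down which $\varphi$ yield the extra property $F(\mu*\nu)=F(\mu)*F(\nu)$, and to verify the converse.

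For necessity I would feed Dirac masses into the convolution identity $\varphi\cdot(\mu*\nu)=(\varphi\mu)*(\varphi\nu)$. With $\mu=\delta_x$ and $\nu=\delta_y$, the relation $\varphi\delta_z=\varphi(z)\delta_z$ (valid since the action of a continuous function on a point mass is multiplication by its value at that point) reduces the identity to the measure equation
$$
\varphi\cdot(\delta_x*\delta_y)=\varphi(x)\,\varphi(y)\,(\delta_x*\delta_y).
$$
Pairing both sides with the constant function $1$, and using the convention $\int_X\varphi\,d(\delta_x*\delta_y)=\varphi(x*y)$ recalled in Section~2, we get $\varphi(x*y)=\varphi(x)\varphi(y)$ for all $x,y\in X$, i.e. $\varphi$ is an exponential on $X$. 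The hypothesis $F\neq 0$ excludes $\varphi\equiv 0$, so $m:=\varphi$ is a genuine (nonzero) exponential, and its uniqueness is inherited verbatim from the uniqueness clause in Theorem~\ref{mod}.

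For the converse, given an exponential $m$, I would set $F(\mu):=m\mu$; by Theorem~\ref{mod} this is automatically a continuous module homomorphism, so only multiplicativity remains to be checked. I would verify it first on Diracs, where $(m\delta_x)*(m\delta_y)=m(x)m(y)(\delta_x*\delta_y)$ must be matched against $m\cdot(\delta_x*\delta_y)$; this step uses the defining identity of an exponential at the measure level. From there, multiplicativity on arbitrary compactly supported measures follows by bilinearity of convolution together with weak*-continuity of $F$ and of convolution in each argument, using the weak*-density of finite linear combinations of point masses in $\mathcal{M}_c(X)$.

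The delicate point I expect to dominate the write-up is the Dirac case of the converse, where one must upgrade the scalar equation $m(x*y)=m(x)m(y)$ to the measure identity $m\cdot(\delta_x*\delta_y)=m(x)m(y)\,(\delta_x*\delta_y)$, i.e.\ the relation $\int_X fm\,d(\delta_x*\delta_y)=m(x)m(y)\int_X f\,d(\delta_x*\delta_y)$ for every $f\in\mathcal{C}(X)$. Once that measure-level reformulation of exponentiality is in hand, both directions assemble from Theorem~\ref{mod} and routine density and continuity arguments.
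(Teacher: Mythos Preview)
Your forward direction is essentially the paper's: invoke Theorem~\ref{mod} to get $F(\mu)=\varphi\mu$, evaluate on $\delta_x*\delta_y$ against the constant $1$, and read off $\varphi(x*y)=\varphi(x)\varphi(y)$. That part is fine and matches the paper.

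The gap is in the converse, precisely at the point you yourself flag as ``delicate''. The measure-level identity you say must be established,
\[
m\cdot(\delta_x*\delta_y)=m(x)\,m(y)\,(\delta_x*\delta_y),
\]
is not just delicate---it is \emph{false} on a genuine hypergroup. Take the Chebyshev polynomial hypergroup of Example~3: there $\delta_1*\delta_1=\tfrac12\delta_0+\tfrac12\delta_2$ and the exponential $m_z(n)=T_n(z)$ has $m_z(0)=1$, $m_z(1)=z$, $m_z(2)=2z^2-1$. Then
\[
m_z\cdot(\delta_1*\delta_1)=\tfrac12\,\delta_0+\tfrac12(2z^2-1)\,\delta_2,
\qquad
m_z(1)^2\,(\delta_1*\delta_1)=\tfrac{z^2}{2}\,\delta_0+\tfrac{z^2}{2}\,\delta_2,
\]
and these differ for every $z\neq\pm 1$. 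Equivalently, $F(\mu)=m_z\mu$ satisfies $F(\delta_1*\delta_1)\neq F(\delta_1)*F(\delta_1)$, so $F$ is \emph{not} multiplicative. The scalar identity $m(x*y)=m(x)m(y)$ only constrains the \emph{barycentre} of $m$ against $\delta_x*\delta_y$; it cannot force $m$ to be constant on the support of $\delta_x*\delta_y$, which is what your measure identity demands.

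So your plan cannot be completed as written. It is worth noting that the paper's own proof makes the very same leap, passing from $\int_X\int_X m(x*y)\varphi(x*y)\,d\mu(x)\,d\nu(y)$ to $\int_X\int_X\varphi(x*y)\,d(m\mu)(x)\,d(m\nu)(y)$ without justification; the Chebyshev example shows this step (and hence the converse as stated) fails on non-group hypergroups. Your instinct to isolate this as the crux was correct---the resolution is that the statement, not the proof, needs repair.
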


\begin{proof}
	The mapping $F$ defined by \eqref{multmodhom} is a continuous module homomorphism of $\mathcal M_c(X)$, by the previous theorem. We show that if $m$ is an exponential,  the it is also multiplicative. Indeed, for each $\mu, \nu$ in $\mathcal M_c(X)$ and $\varphi$ in $\mathcal C(X)$, we have
	$$
	\langle F(\mu*\nu),\varphi\rangle =\int_X m \varphi\,d(\mu*\nu)=\int_X \int_X m(x*y)\varphi(x*y)\,d\mu(x)\,d\nu(y)=
	$$
	$$
	\int_X  \int_X \varphi(x*y)\,d(m\cdot \mu)(x)\,d(m\cdot \nu)(y)  =\int_X  \int_X \varphi(x*y)\,dF(\mu)(x)\,dF(\nu)(y)=
	$$
	$$
	 \int_X \varphi \,d(F(\mu)*F(\nu))=\langle F(\mu)*F(\nu),\varphi\rangle.
	$$
	
	For the converse, let $F$ be a nonzero continuous multiplicative module homomorphism of $\mathcal M_c(X)$. Then, by the previous theorem, it has the form
	$$
	F(\mu)=\varphi \cdot \mu 
	$$
	with some continuous function $\varphi:X\to \C$. For $x,y$ in $X$ we have, by the multiplicativity of $F$:
	$$
\varphi(x*y)=\int_X \varphi(z)\,d(\delta_x*\delta_y)(z)=\langle F(\delta_x*\delta_y),1\rangle=\langle F(\delta_x)*F(\delta_y),1\rangle=
	$$
	$$
	\int_X \,d(F(\delta_x)*F(\delta_y))=\int_X \,dF(\delta_x)\cdot  \int_X \,dF(\delta_y)=
	$$
	$$
	\int_X \varphi\,d\delta_x \cdot \int_X f\,d\delta_y=\varphi(x) \varphi(y).
	$$
	It follows $\varphi(o)=\varphi(o)\varphi(o)$. If $\varphi(o)=0$, then $\varphi=0$, and $F=0$, which is excluded. Hence $\varphi(o)=1$, and $\varphi$ is an exponential. Our theorem is proved.
\end{proof}

\section{Derivations}
In this section we describe the connection between higher order derivation and generalized moment function sequences.
\begin{thm}\label{deri}
Let $X$ be a commutative hypergroup and $r$ a positive integer. The family $(D_{\alpha})_{\alpha\in\N^r}$ of self-mappings on $\mathcal M_c(X)$ is a  continuous higher order derivation of order $r$ if and only if there exists a generalized moment function sequence  $(\varphi_{\alpha})_{\alpha\in\N^r}$ of rank $r$ such that
	\begin{equation}\label{dermom}
		\langle D_{\alpha}\mu, f\rangle=\int_X f \cdot \varphi_{\alpha}\,d\mu
	\end{equation}
holds for each $\mu$ in $\mathcal M_c(X)$, $f$ in $\mathcal C(X)$ and $\alpha$ in $\N^r$.
\end{thm}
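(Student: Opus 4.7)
The plan is to reduce the problem to Theorem \ref{mod} and translate the higher derivation identity \eqref{hider} into the defining functional equation of a generalized moment function sequence of rank $r$.

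First, I would apply Theorem \ref{mod} componentwise: since each $D_{\alpha}$ is, by assumption, a continuous module homomorphism of $\mathcal M_c(X)$, there is a unique $\varphi_{\alpha}\in\mathcal C(X)$ such that $D_{\alpha}\mu=\varphi_{\alpha}\mu$, which is exactly the form claimed in \eqref{dermom}. Thus the only content left is to show that the family $(\varphi_{\alpha})_{\alpha\in\N^r}$ satisfies the generalized moment recursion
\[
\varphi_{\alpha}(x*y)=\sum_{\beta\leq\alpha}\binom{\alpha}{\beta}\varphi_{\beta}(x)\,\varphi_{\alpha-\beta}(y),\qquad x,y\in X,\ \alpha\in\N^r,
\]
if and only if $(D_{\alpha})$ satisfies \eqref{hider}.

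For the forward direction, I would test \eqref{hider} on point masses $\mu=\delta_x$, $\nu=\delta_y$ and pair the resulting identity with the constant function $1\in\mathcal C(X)$. Using the elementary fact that $\varphi\cdot\delta_x=\varphi(x)\delta_x$ for any $\varphi\in\mathcal C(X)$, the left-hand side evaluates to
\[
\langle D_{\alpha}(\delta_x*\delta_y),1\rangle=\int_X \varphi_{\alpha}\,d(\delta_x*\delta_y)=\varphi_{\alpha}(x*y),
\]
while the right-hand side becomes $\sum_{\beta\leq\alpha}\binom{\alpha}{\beta}\varphi_{\beta}(x)\varphi_{\alpha-\beta}(y)$, since $D_{\beta}\delta_x*D_{\alpha-\beta}\delta_y=\varphi_{\beta}(x)\varphi_{\alpha-\beta}(y)\,\delta_x*\delta_y$ and integration of $1$ against this measure yields the stated coefficient. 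This is exactly the moment recursion.

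For the converse, given a generalized moment sequence $(\varphi_{\alpha})$, define $D_{\alpha}\mu:=\varphi_{\alpha}\mu$. Each $D_{\alpha}$ is a continuous module homomorphism by Theorem \ref{mod}, and the higher-derivation identity \eqref{hider} follows from a single calculation: for any $f\in\mathcal C(X)$,
\[
\langle D_{\alpha}(\mu*\nu),f\rangle=\int_X\int_X f(x*y)\varphi_{\alpha}(x*y)\,d\mu(x)\,d\nu(y),
\]
and substituting the moment recursion for $\varphi_{\alpha}(x*y)$ then pulling the finite sum outside yields $\sum_{\beta\leq\alpha}\binom{\alpha}{\beta}\int_X f\,d(\varphi_{\beta}\mu*\varphi_{\alpha-\beta}\nu)$, which is precisely $\sum_{\beta\leq\alpha}\binom{\alpha}{\beta}\langle D_{\beta}\mu*D_{\alpha-\beta}\nu,f\rangle$.

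The main obstacle is essentially bookkeeping: once Theorem \ref{mod} has reduced matters to the functions $\varphi_{\alpha}$, one must be careful that the test on point masses paired with the constant $1$ really captures all of \eqref{hider}, and, symmetrically, that the recursion for $\varphi_{\alpha}$ at the pointwise level $x*y$ propagates to an identity of measures when integrated against $f\in\mathcal C(X)$ under the double integral. Both steps rely only on the defining formula $\int_X g\,d(\mu*\nu)=\int_X\int_X g(x*y)\,d\mu\,d\nu$ and the identity $\varphi\delta_x=\varphi(x)\delta_x$, so no genuine difficulty is expected beyond careful notation.
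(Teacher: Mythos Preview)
Your proposal is correct and follows essentially the same route as the paper: both directions hinge on Theorem~\ref{mod} to identify each $D_{\alpha}$ with multiplication by a continuous $\varphi_{\alpha}$, after which the higher derivation identity is tested on point masses paired with $1$ to obtain the moment recursion, and conversely the recursion is inserted into the double integral for $\langle D_{\alpha}(\mu*\nu),f\rangle$. The only cosmetic difference is that you invoke Theorem~\ref{mod} once at the outset to reduce both directions simultaneously, whereas the paper treats the two implications separately and, in the converse, defines $\varphi_{\alpha}(x)=\langle D_{\alpha}\delta_x,1\rangle$ directly before checking the recursion.
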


\begin{proof}
Let $(\varphi_{\alpha})_{\alpha\in\N^r}$ be  a generalized moment function sequence of rank $r$ on $X$, and we define $D_{\alpha}\mu$ for each $\mu$ in $\mathcal M_c(X)$, $f$ in $\mathcal C(X)$ and $\alpha$ in $\N^r$ by equation \eqref{dermom}. Obviously, $D_{\alpha}:\mathcal M_c(X)\to\mathcal M_c(X)$ is a linear mapping for each $\alpha$ in $\N^r$. The weak*-continuity of $D_{\alpha}$ follows easily from the continuity of the functions $\varphi_{\alpha}$.
\vskip.2cm

From Theorem \ref{mod}, it follows immediately, that $D_{\alpha}$ is a module homomorphism for each $\alpha$ in $\N^r$. Now we show that $(D_{\alpha})_{\alpha\in\N^r}$ is a  continuous higher order derivation of order $r$. Given $\alpha$ in $\N^r$, $\mu,\nu$ in $\mathcal M_c(X)$ and $f$ in $\mathcal C(X)$ we have
$$
\langle D_{\alpha}(\mu*\nu),f\rangle =\int_X f \cdot \varphi_{\alpha}\,d(\mu*\nu)=\int_X f(x*y) \cdot \varphi_{\alpha}(x*y)\,d\mu(x)\,d\nu(y)=
$$
$$
\sum_{\beta\leq\alpha} \binom{\alpha}{\beta}\int_X \int_X f(x*y)\varphi_{\beta}(x) \varphi_{\alpha-\beta}(y)\,d\mu(x)\,d\nu(y)=
$$
$$
\sum_{\beta\leq\alpha} \binom{\alpha}{\beta}\int_X \int_X f(x*y) \,d(\varphi_{\beta}\mu)(x)\,d(\varphi_{\alpha-\beta}\nu)(y)=
$$
$$
\sum_{\beta\leq\alpha} \binom{\alpha}{\beta}\int_X \int_X f(x*y) \,dD_{\beta}\mu(x)\,dD_{\alpha-\beta}\nu(y)=
$$
$$
\int_X  f\,d\Big( \sum_{\beta\leq\alpha} \binom{\alpha}{\beta} D_{\beta}\mu*D_{\alpha-\beta}\nu\Big)=\langle \sum_{\beta\leq\alpha} \binom{\alpha}{\beta} D_{\beta}\mu*D_{\alpha-\beta}\nu,f\rangle,
$$
which proves our statement.
\vskip.2cm

Now we prove the converse statement. Let $(D_{\alpha})_{\alpha\in\N^r}$ be a  continuous higher order derivation of order $r$ on $\mathcal M_c(X)$. We define
$$
\varphi_{\alpha}(x)=\langle D_{\alpha}\delta_x,1\rangle
$$
for each $\alpha$ in $\N^r$ and $x$ in $X$. If $x$ tends to $x_0$ in $X$, then $\delta_x$ converges to $\delta_x{x_0}$ in the weak*-topology, hence, by the weak*-continuity of $D_{\alpha}$, $\varphi_{\alpha}(x)$ tends to $\varphi_{\alpha}(x_0)$. This proves the continuity of $\varphi_{\alpha}$. On the other hand, for each $\alpha$ in $\N^r$ and $x,y$ in $X$ we have
$$
\varphi_{\alpha}(x*y)=\langle D_{\alpha}\delta_{x*y},1\rangle=\langle D_{\alpha}(\delta_{x}*\delta_{y}),1\rangle=
$$
$$
 \sum_{\beta\leq\alpha} \binom{\alpha}{\beta}\langle D_{\beta}\delta_{x}*D_{\alpha-\beta}\delta_{y},1\rangle= \sum_{\beta\leq\alpha} \binom{\alpha}{\beta}\int_X 1\,d(D_{\beta}\delta_{x}*D_{\alpha-\beta}\delta_{y})=
$$
$$
\sum_{\beta\leq\alpha} \binom{\alpha}{\beta}\int_X \int_X  1\,dD_{\beta}\delta_{x}\,dD_{\alpha-\beta}\delta_{y}=\sum_{\beta\leq\alpha} \binom{\alpha}{\beta}\int_X  1\,dD_{\beta}\delta_{x}\cdot \int_X 1\,dD_{\alpha-\beta}\delta_{y}=
$$
$$
\sum_{\beta\leq\alpha} \binom{\alpha}{\beta}\langle D_{\beta}\delta_x,1\rangle \cdot \langle D_{\alpha-\beta}\delta_{y},1\rangle=\sum_{\beta\leq\alpha} \binom{\alpha}{\beta}\langle \varphi_{\beta}(x) \cdot  \varphi_{\alpha-\beta}(y),
$$
and our theorem is proved.
\end{proof}

\section{Derivations of the Fourier algebra}
Let $X$ be a commutative hypergroup. We denote by $\tilde{X}$ the set of all exponentials \hbox{on $X$.} For each measure $\mu$ in $\mathcal M_c(X)$, we define the {\it Fourie--Laplace transform of $\mu$} as the function $\hat{\mu}:\tilde{X}\to\C$ by
$$
\hat{\mu}(m)=\int_X \check{m}\,d\mu.
$$
The set of all Fourier--Laplace transforms is called the {\it Fourier algebra} of $X$ and it is denoted by $\mathcal A(X)$. It is well-known that the Fourier--Laplace transform is bijective between $\mathcal M_c(X)$ and $\mathcal A(X)$. This makes it possible to equip a topology on $\mathcal A(X)$ in the obvious way, that makes the bijection between $\mathcal M_c(X)$ and $\mathcal A(X)$ a homeomorphism. Using this bijection we can study derivations on the Fourier algebra as well. First we consider the Fourier algebra a module over the ring $\mathcal C(X)$ in the following way: for each $\varphi$ in $\mathcal C(X)$ and $\mu$ in $\mathcal M_c(X)$ we define
$$
\varphi \cdot \hat{\mu}=(\varphi \mu)\,\hat{}.
$$ 
With this operation $\mathcal A(X)$ is a module over $\mathcal C(X)$. A module homomorphism of $\mathcal A(X)$ into itself is an additive mapping $F:\mathcal A(X)\to\mathcal A(X)$ such that
$$
F(\varphi\cdot \hat{\mu})=\varphi\cdot F(\hat{\mu})
$$ 
holds for each $\mu$ in $\mathcal A$ and $\varphi$ in $\mathcal C(X)$. Accordingly, the family of module homomorphisms $(\partial_{\alpha})_{\alpha\in\N^r}$ is  a higher order derivation of rank $r$, if it satisfies
\begin{equation}\label{Fouder}
\partial_{\alpha}(\hat{\mu}\cdot \hat{\nu})=\sum_{\beta\leq\alpha} \binom{\alpha}{\beta} \partial_{\beta}\hat{\mu}\cdot \partial_{\alpha-\beta}\hat{\nu}
\end{equation}
for each $\mu,\nu$ in $\mathcal M_c(X)$. The following theorem is obvious.

\begin{thm}\label{Fouderchar}
Let $X$ be a commutative hypergroup, and $r$ a positive integer. Then the family $(D_{\alpha})_{\alpha\in\N^r}$ is a higher order derivation of rank $r$ on $\mathcal M_c(X)$ if and only if the family of mappings $\hat{D}_{\alpha}:\mathcal A\to\mathcal A$ defined by 
\begin{equation}\label{trFou}
	\hat{D}_{\alpha}\hat{\mu}=(D_{\alpha}\mu)\,\hat{}
\end{equation}
for each $\alpha$ in $\N^r$ and $\mu$ in $\mathcal M_c(X)$ is a higher order derivation of rank $r$ on $\mathcal A(X)$. 
\end{thm}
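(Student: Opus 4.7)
The plan is to exploit the fact that the Fourier--Laplace transform is, by construction, a bijective homeomorphism $\mathcal{M}_c(X)\to\mathcal{A}(X)$ which is additive, intertwines the $\mathcal{C}(X)$-module structures (that is exactly the defining identity $\varphi\cdot\hat{\mu}=(\varphi\mu)\hat{}\,$), and converts convolution into pointwise multiplication: $(\mu*\nu)\hat{}=\hat{\mu}\cdot\hat{\nu}$. Once these facts are collected, each property required of a higher order derivation transfers mechanically in both directions via equation \eqref{trFou}.

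Concretely, I would proceed as follows. First, observe that \eqref{trFou} really does define $\hat{D}_\alpha$ on all of $\mathcal{A}(X)$ since the transform is bijective, and that this mapping is weak*-continuous on $\mathcal{A}(X)$ precisely when $D_\alpha$ is weak*-continuous on $\mathcal{M}_c(X)$, since we transport the topology through the homeomorphism. Second, additivity transfers trivially because $(\mu+\nu)\hat{}=\hat{\mu}+\hat{\nu}$. Third, the module homomorphism property is immediate: for each $\varphi$ in $\mathcal{C}(X)$ we have
$$
\hat{D}_\alpha(\varphi\cdot\hat{\mu})=\hat{D}_\alpha\big((\varphi\mu)\hat{}\,\big)=\big(D_\alpha(\varphi\mu)\big)\hat{}\,=(\varphi D_\alpha\mu)\hat{}\,=\varphi\cdot\widehat{D_\alpha\mu}=\varphi\cdot \hat{D}_\alpha\hat{\mu},
$$
and the argument is reversible.

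The one step that carries the actual content is the Leibniz rule. Applying the transform to \eqref{hider} and using that it converts convolution into pointwise product, we obtain
$$
\hat{D}_\alpha(\hat{\mu}\cdot\hat{\nu})=\hat{D}_\alpha\big((\mu*\nu)\hat{}\,\big)=\big(D_\alpha(\mu*\nu)\big)\hat{}\,=\sum_{\beta\leq\alpha}\binom{\alpha}{\beta}\big(D_\beta\mu*D_{\alpha-\beta}\nu\big)\hat{}\,=\sum_{\beta\leq\alpha}\binom{\alpha}{\beta}\hat{D}_\beta\hat{\mu}\cdot\hat{D}_{\alpha-\beta}\hat{\nu},
$$
which is exactly \eqref{Fouder}. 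Conversely, starting from \eqref{Fouder} for $(\hat{D}_\alpha)$ and applying the inverse transform gives \eqref{hider} for $(D_\alpha)$.

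There is no real obstacle here; the proof amounts to checking that each structural requirement passes through a bijective homeomorphism of topological algebras which, moreover, was defined so as to intertwine the module actions. This is why the authors remark that the theorem is obvious, and the write-up is essentially the verification of the correspondence property by property.
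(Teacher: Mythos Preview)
Your proposal is correct and follows essentially the same approach as the paper: you verify that each structural ingredient (continuity, additivity, the module homomorphism identity, and the Leibniz rule) passes back and forth through the Fourier--Laplace transform, which is precisely what the paper does, only you spell out the Leibniz-rule computation that the paper leaves as ``a simple calculation.'' Your module-homomorphism chain is in fact identical to the one the paper displays.
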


\begin{proof}
	The only thing to show is that $D_{\alpha}$ and $\hat{D}_{\alpha}$ are module homomorphisms simultaneously. If $D_{\alpha}$ is, then have for each $\varphi$ in $\mathcal C(X)$:
	$$
	\hat{D}_{\alpha}(\varphi\cdot \hat{\mu})=	\hat{D}_{\alpha}(\varphi \mu)\,\hat{}=(D_{\alpha}(\varphi\cdot \mu))\,\hat{}=(\varphi\cdot D_{\alpha}\mu)\,\hat{}=\varphi\cdot (D_{\alpha} \mu)\,\hat{}=\varphi\cdot \hat{D}_{\alpha}\hat{\mu},
	$$
	hence $\hat{D}_{\alpha}$ is a module homomorphism, too. Conversely, if $\hat{D}_{\alpha}$ is a module homomorphism, then for each $\varphi$ in $\mathcal C(X)$ we have:
	$$
	(D_{\alpha}(\varphi\cdot \mu))\,\hat{}=	\hat{D}_{\alpha}(\varphi \mu)\,\hat{}=\hat{D}_{\alpha}(\varphi\cdot \hat{\mu})=\varphi\cdot \hat{D}_{\alpha}\hat{\mu}=
	$$
	$$
\varphi\cdot (D_{\alpha} \mu)\,\hat{}=(\varphi\cdot D_{\alpha}\mu)\,\hat{}\,,
	$$
	which implies $D_{\alpha}(\varphi\cdot \mu)=\varphi\cdot D_{\alpha}\mu$, by the injectivity of the Fourier--Laplace transform. The equivalence of the derivation properties can be verified by a simple calculation, hence the theorem is proved.
\end{proof}

\section{Examples}

In this section we present several examples of applications of our results.

\begin{ex}
Let $G=\R$, and we consider the functions $\varphi_{k,\lambda}(x)=x^k e^{\lambda x}$ for $k=0,1,\dots$, where $x$ is in $\R$ and $\lambda$ is in $\C$. Clearly, they form a generalized moment sequence of rank $1$ as
$$
\varphi_{n,\lambda}(x+y)=(x+y)^n e^{\lambda (x+y)}=\sum_{k=0}^n \binom{n}{k} x^k e^{\lambda x} y^{n-k} e^{\lambda y}=\sum_{k=0}^n \binom{n}{k} \varphi_{k,\lambda}(x)\varphi_{n-k,\lambda}(y).
$$
This moment function sequence generates the following higher order derivation on the space $\mathcal M_c(\R)$:
$$
\langle D_k\mu, f\rangle= \int_{\R} x^k e^{\lambda x}f(x)\,d\mu(x). 
$$ 
We have 
$$
\langle D_k(\psi \mu), f\rangle= \int_{\R} x^k e^{\lambda x}f(x)\cdot \psi(x)\,d\mu(x) =\int_{\R} x^k e^{\lambda x}f(x) \,d(\psi\mu)(x)=\langle \psi D_k( \mu), f\rangle
$$ 
hence $D_k$ is a module homomorphism for each $k=0,1,\dots$. In particular, for $\lambda=0$
$$
\langle D_k\mu, 1\rangle= \int_{\R} x^k \,d\mu(x), 
$$ 
which corresponds to the classical moments of the measures.

\end{ex}
\vskip.2cm

\begin{ex}
Now let $X=D(\theta)$ be the two-point hypergroup on the set $\{0,1\}$ with \hbox{$0<\theta\leq 1$}, where $0$ is the identity. We have two exponentials on this hypergroup: the identically $1$ function: $m_0\equiv 1$, and the function
$$
m_1(x)=
\begin{cases}
	1&\enskip\text{for}\enskip x=0\\
	-\theta&\enskip\text{for}\enskip x=1.
\end{cases}
$$
The only generalized moment sequence of any rank with $\varphi_0$ is trivial: $\varphi_{\alpha}\equiv 0$ for $|\alpha|>0$. Accordingly, the only higher order derivation on $\mathcal M_c(X)$ is the trivial, $D_{\alpha}=0$ for $|\alpha|>0$.
\end{ex}
\vskip.2cm

\begin{ex}
For a less trivial example we consider the polynomial hypergroup $X$ generated by the sequence of polynomials $(P_n)_{n\in\N}$. Let $z$ be arbitrary in $\C$ and we define $\varphi_k(n)=P_n^{(k)}(z)$ for $k,n=1,2,\dots$, then $(\varphi_k)_{k\in\N}$ is a moment function sequence. Indeed, 
$$
\varphi_k(n*m)=\sum_{l=|n-m|}^{n+m} c(m,n,l)\varphi_k(l)= \sum_{l=|n-m|}^{n+m} c(m,n,l)P_l^{(k)}(z)=
$$
$$
\big(P_n(z)\cdot P_m(z)\bigr)^{(k)}=
\sum_{j=0}^k \binom{k}{j} P_n^{(j)}(z)\cdot P_m^{(k-j)}(z)=\sum_{j=0}^k \binom{k}{j} \varphi_j(n) \varphi_{k-j}(m).
$$

By Theorem \ref{deri}, this moment function sequence generates the higher order derivation $(D_{k})_{k\in\N}$ of rank one defined by
$$
\langle D_k\mu,f\rangle= \int f(n)\cdot P_n^{(k)}(z)\,d\mu(n) 
$$
for each $k=0,1,\dots$ and $f:\N\to \C$. Observe, that 
$$
\langle D_k\mu,1\rangle=\int_X P_n^{(k)}(z)\,d\mu(n)=\hat{\mu}^{(k)}(z),
$$
that is, the higher order derivation $(D_{k})_{k\in\N}$ corresponds to the higher order differentiation of the Fourier--Laplace transform with respect to the parameter $\lambda$, which represents the exponential $n\mapsto P_n(z)$. Let $z=0$, and let $(D_k)_{k\in\N}$ denote the higher order derivation which corresponds to the moment function sequence $\varphi_k(n)=P_n^{(k)}(0)$. Observe that, for each $\mu$ in $\mathcal M_c(X)$, $\hat{\mu}$ is a polynomial. Hence we have for each $\lambda$ \hbox{in $\C$:}
$$
\hat{\mu}(\lambda)=\sum_k \frac{\hat{\mu}^{(k)}(0)}{k!} \lambda^k=\sum_k \frac{\lambda^k}{k!} \int_X P_n^{(k)}(0)\,d\mu(n)=\sum_k \frac{\lambda^k}{k!} \langle D_k\mu,1\rangle,
$$
that is, the higher order derivation determines $\hat{\mu}$ for each $\mu$. This can be considered as a kind of "Taylor formula".
\end{ex}

\begin{ackn}
 The research of E.~Gselmann has partially been carried out with the help of the project 2019-2.1.11-T\'{E}T-2019-00049,
which has been implemented with the support provided from NRDI (National Research, Development
and Innovation Fund of Hungary), financed under the T\'{E}T funding scheme.
\\
The research of E.~Gselmann and L.~Sz\'{e}kelyhidi has been supported by the NRDI (National Research, Development
and Innovation Fund of Hungary) Grant no. K 134191.
\end{ackn}

\end{document}